\theoremstyle{plain}
\newtheorem{theorem}{Theorem}[section]
\newtheorem{cor}[theorem]{Corollary}
\newtheorem{lem}[theorem]{Lemma}
\newtheorem{prop}[theorem]{Proposition}
\theoremstyle{definition}
\newtheorem{defn}[theorem]{Definition}
\newtheorem{obs}[theorem]{Remark}
\numberwithin{equation}{section}
\numberwithin{figure}{section}
\begin{document}

\baselineskip=18pt

\title[A contact process with stronger mutations]{A contact process with stronger mutations on trees}

\author[Fabio Lopes]{Fabio Lopes}
\address[F. Lopes]{Departamento de Matem\'atica, Universidad Tecnol\'ogica Metropolitana, Chile}
\email{f.marcellus@utem.cl}

\author[Alejandro Rold\'an]{Alejandro Rold\'an-Correa}
\address[A. Rold\'an]{Instituto de Matematicas, Universidad de Antioquia, Colombia}
\email{alejandro.roldan@udea.edu.co}

\thanks{Research supported by ANID-FONDECYT Iniciación grant (11230220) and Universidad de Antioquia.}

\keywords{Branching Process, Birth-and-assessination process, population dynamics}
\subjclass[2010]{60J80, 60J85, 92D25}
\date{\today}

\begin{abstract} 
We consider a spatial stochastic model for a pathogen population growing inside a host that attempts to eliminate the pathogens through its immune system. The pathogen population is divided into different types. A pathogen can either reproduce by generating a pathogen of its own type or produce a pathogen of a new type that does not yet exist in the population. Pathogens with living ancestral types are protected against the host's immune system as long as their progenitors are still alive. Each pathogen type without living ancestral types is eliminated by the immune system after a random period, independently of the other types. When a pathogen type is eliminated from the system, all pathogens of this type die simultaneously. In this paper, we determine the conditions on the set of model parameters that dictate the survival or extinction of the pathogen population when the dynamics unfold on graphs with an infinite tree structure.

\end{abstract}

\maketitle

\section{Introduction} \label{S: Introduction}
Pathogens are microorganisms that cause disease by invading a host organism, where they find favorable conditions for replication. The host's immune system acts as a defense mechanism, identifying and eliminating these invaders. However, pathogens can acquire mutations that enable them to adapt to new conditions and evade immune detection, complicating their elimination and potentially leading to persistent infections.  Several stochastic models have been proposed to understand persistence and extinction in biological populations under disturbances, mutation, and immune response. Notably, Bertacchi, Zucca and Ambrosini \cite{BZA2016} and Zucca \cite{FZ2014} have examined how populations adapt their timing of life-history events under environmental disturbances and how bacterial persistence arises under antibiotic treatments. Cox and Schinazi \cite{Cox} and Schinazi \cite{schinazi2} have focused on the role of mutation in viral survival, showing that populations of ever-changing mutants may persist even beyond classical extinction thresholds. To investigate whether a pathogen can evade the immune system solely due to a high mutation rate, Schinazi and Schweinsberg~\cite{SS2008} introduced three mathematical models. These models assume that pathogens mutate to produce new variants and that the immune system eradicates all pathogens of a given type simultaneously after a random period. The main difference among the three models lies in how the immune system operates. For a broader overview of stochastic models in biology, see \cite{LanchierBook}. 

Among the models proposed by Schinazi and Schweinsberg~\cite{SS2008}, Models 2 and S2 are particularly noteworthy, as they extend Harris’s Contact Process~\cite{H1974}. Model 2 is a non-spatial version, while Model S2 represents a spatial version on \(\mathbb{Z}^d\). These models describe a pathogen population that evolves by generating new pathogens, which may either be of the same type as their ancestral pathogen or a completely new type (mutation), distinct from all existing ones in the population. In this framework, whenever a new pathogen type appears for the first time, it survives for a random duration, independently of other types, before all pathogens of that type are simultaneously eliminated. Schinazi and Schweinsberg~\cite{SS2008} established conditions for phase transitions (survival or extinction) in Models 2 and S2. Later, Liggett et al.~\cite{LSS2008} studied the spatial version of Model S2 on trees, determining conditions not only for phase transition but also for weak survival.  

Recently, Grejo et al.~\cite{GLMR2024} introduced a variant of Models 2 and S2 by incorporating evolutionary dynamics into mutations. This variant assumes that each new pathogen type (mutation) is stronger than its ancestral type (evolution), requiring the immune system to eliminate the ancestral type before it can target the new one. Specifically, the model in Grejo et al.~\cite{GLMR2024} describes a pathogen population that evolves by generating new pathogens, which may either be of the same type as their ancestor or, due to mutations, a stronger type distinct from all others in the population. All pathogens of a given type are simultaneously eliminated by the host’s immune system after a random time, provided their ancestral pathogens are no longer present. Meanwhile, pathogens with living ancestors remain protected until their progenitors are eliminated. In this framework, mutations are considered beneficial (i.e., making pathogens stronger).  

In the non-spatial model of Grejo et al.~\cite{GLMR2024}, pathogens reproduce independently at rate \(\lambda > 0\). Upon birth, a new pathogen inherits its parent's type with probability \(1 - r\) or undergoes a beneficial mutation with probability \(r\), producing a stronger type. The immune system responds independently at rate 1, eradicating all pathogens of a given type once no ancestral pathogens of that type remain. The spatial version of this model follows similar birth, mutation, and death dynamics but evolves on a graph, where each pathogen occupies a site and can only place offspring in adjacent empty sites.  
Grejo et al.~\cite{GLMR2024} established that in the non-spatial model, pathogens survive with positive probability if and only if \(\lambda > (1 + \sqrt{r})^{-2}\). For the spatial model on \(\mathbb{Z}^d\), the behavior differs from the non-spatial case: if \(\lambda\) is small, pathogens die out with probability 1, but if \(\lambda\) is sufficiently large, two scenarios emerge, pathogens survive with positive probability for large \(r\) but die out with probability 1 for small \(r\).  

In this paper, we study the spatial version of the model proposed in \cite{GLMR2024} on graphs with an infinite tree structure. This work is organized as follows. In the next section, we formally define the spatial model on general graphs and establish phase transition results for survival probability when the graph is an infinite tree. Finally, in Section~\ref{proofs}, we provide proofs for the results presented in Section~\ref{modelandresult}.

\section{Spatial model}\label{modelandresult}

We consider the evolution of a population of pathogens occurring on a graph $\mathcal{G}$. The dynamics of the model are as follows. Each vertex of $\mathcal{G}$ can either be occupied by a pathogen or be empty. We assume that at time $t=0$, there is a single pathogen of type 1 on a vertex of $\mathcal{G}$, which we call the  root of $\mathcal{G}$. For a vertex \(x\) occupied by a pathogen and \(y\) one of its nearest neighbors (out-neighbors if $\mathcal{G}$ is a directed graph), the pathogen on \(x\), after an exponential time with rate \(\lambda\), gives birth to a pathogen on \(y\), provided \(y\) is empty. If \(y\) is occupied, nothing happens. This new pathogen will be of the same type as the pathogen on \(x\) with probability \(1-r\). On the other hand, with probability \(r\), a mutation will occur and the new pathogen will be of a different type, one that has not appeared so far. We consider the pathogen present at time zero to be type 1, and the \(k\)-th type to appear will be called type \(k\). To each new type, we associate an independent exponential clock with rate 1, which will start ticking only when its progenitor dies. When the clock of a given type rings, all pathogens of that type are simultaneously eliminated  by the immune system. These clocks can be interpreted as the incremental time (or killing time) required for the immune system to recognize and eliminate a new pathogen type after having eliminated its progenitor. Once a type is recognized, the immune system is capable of eradicating all pathogens of that type.  We denote this model by $\{\mathcal{G},\lambda,r\}$.

Observe that the model \(\{\mathcal{G}, \lambda, r\}\) is a continuous-time stochastic process with state space \(\{0, 1, \ldots\}^{\mathcal{V(G)}}\), where \(\mathcal{V(G)}\) denotes the vertex set of \(\mathcal{G}\). The evolution of this process (the status at time \(t\)) is denoted by \(\eta_t\). Specifically, the status of a vertex \(x\) at time \(t\) can be either \(\eta_t(x) = 0\) (empty) or \(\eta_t(x) = k\) (occupied by a pathogen of type \(k\)).

\begin{obs}  
The model \(\{\mathcal{G},\lambda,r\}\) follows the same birth and mutation dynamics as model S2 from Schinazi and Schweinsberg~\cite{SS2008}. The key difference lies in the pathogen elimination mechanism. In model S2, when a new pathogen type appears in the population, it is also assigned an independent killing time - an exponential clock with rate 1. When this clock rings, all pathogens of that type are simultaneously eliminated by the immune system. Unlike in \(\{\mathcal{G},\lambda,r\}\), where the killing times start ticking only when the progenitor type dies, here they begin ticking from the moment the new pathogen type is created.  \\
Moreover, in \(\{\mathcal{G},\lambda,r\}\), the lifetimes of pathogen types are not all independent of each other, as they are in model S2. Consequently, \(\{\mathcal{G},\lambda,r\}\) is not a Markov process, whereas model S2 does satisfy the Markov property.  
\end{obs}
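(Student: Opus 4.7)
The plan is to prove both assertions of the remark: the dependence among type lifetimes and the failure of the Markov property.

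For the dependence of lifetimes, I would argue directly from the construction. Let $B_k$, $D_k$, and $\pi(k)$ denote the birth time, death time, and progenitor type of type $k$, and let $\xi_k \sim \mathrm{Exp}(1)$ be the independent killing clock of type $k$. The model's rules give $D_1 = \xi_1$ and $D_k = D_{\pi(k)} + \xi_k$ for $k \geq 2$, which unrolls along the ancestral chain $1 = a_0, a_1, \dots, a_m = k$ to
\[
D_k = \sum_{i=0}^{m} \xi_{a_i}.
\]
Every pair of types that ever appears shares at least the summand $\xi_1$, so $\mathrm{Cov}(D_j, D_k) \geq \mathrm{Var}(\xi_1) > 0$; since the birth times depend only on the Poisson birth/mutation clocks and are independent of the $\xi$'s, the lifetimes $L_k = D_k - B_k$ inherit this dependence, which settles the first assertion.

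For the failure of the Markov property, I would exhibit two positive-probability events $H_A, H_B \in \mathcal{F}_t$ producing the same configuration $\eta_t$ but different conditional futures. Take $\mathcal{G} = K_3$ on vertices $a, b, c$ with initial type $1$ at $a$: in $H_A$, the type-$1$ pathogen at $a$ produces by mutation type $2$ at $b$ and, at a later event, type $3$ at $c$; in $H_B$, the type-$1$ pathogen at $a$ produces by mutation type $2$ at $b$, and then the type-$2$ pathogen at $b$ produces by mutation type $3$ at $c$. Both events have positive probability and, requiring no deaths before time $t$, both yield $\eta_t(a) = 1$, $\eta_t(b) = 2$, $\eta_t(c) = 3$; yet the genealogies differ, with $\pi_A(3) = 1$ while $\pi_B(3) = 2$. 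To extract a distinguishing state-measurable event I would compare the first times $\tau = \inf\{s > t : \eta_s(b) = 0\}$ and $\sigma = \inf\{s > t : \eta_s(c) = 0\}$: since reproductions only go to empty vertices, $\tau = D_2$ and $\sigma = D_3$, and under $H_B$ the recursion forces $D_3 = D_2 + \xi_3 > D_2$, giving $\mathbb{P}(\sigma < \tau \mid H_B) = 0$, whereas under $H_A$ the killing clocks of types $2$ and $3$ start simultaneously at the death time of type $1$, so $\mathbb{P}(\sigma < \tau \mid H_A) = \mathbb{P}(\xi_3 < \xi_2) = 1/2$. Since $H_A$ and $H_B$ are positive-probability subsets of $\{\eta_t(a,b,c) = (1,2,3)\}$, the conditional law of $(\eta_s)_{s > t}$ given $\mathcal{F}_t$ cannot be a function of $\eta_t$ alone, so $(\eta_t)_{t\geq 0}$ is not Markov.

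The main difficulty is precisely this last step: the genealogical information that drives the future evolution is invisible to the state process, so one must argue that it still leaves a detectable trace in the state dynamics. The triangle $K_3$ is convenient because pairwise adjacency keeps vertices $b$ and $c$ occupied by their initial type-$2$ and type-$3$ pathogens until those types die, reducing the comparison of $H_A$ and $H_B$ to a clean comparison between two independent exponentials started simultaneously and two fired sequentially.
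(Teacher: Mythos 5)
This statement is a remark that the paper asserts without proof, so there is no argument of the authors' to compare against; your write-up supplies a justification where the paper gives none, and in substance it is correct. The recursion \(D_k=D_{\pi(k)}+\xi_k\) is exactly what the delayed-clock rule yields, and the \(K_3\) construction is a clean witness of non-Markovianity: \(H_A\) and \(H_B\) both have positive probability, both produce the identical state \((1,2,3)\) (no further births are possible on a fully occupied triangle), the hitting times \(\tau,\sigma\) of the empty state at \(b,c\) are measurable functions of the post-\(t\) trajectory, and the conditional probability of \(\{\sigma<\tau\}\) equals \(1/2\) on \(H_A\) but \(0\) on \(H_B\), which is incompatible with the conditional law of the future being a function of \(\eta_t\) alone.

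Two caveats. First, in the covariance step you assert that the birth times ``are independent of the \(\xi\)'s''; that is not quite right, since which types ever exist, and when and where they are born, depends on how long earlier types survive, i.e.\ on the \(\xi\)'s, so both the summation set in your formula for \(D_k\) and the event that types \(j\) and \(k\) both appear are \(\xi\)-dependent. The dependence claim is settled more cheaply without covariances: on the event that type \(k\) exists one has \(D_k=D_{\pi(k)}+\xi_k>D_{\pi(k)}\) almost surely, so a type deterministically outlives its progenitor, which is already incompatible with the i.i.d.\ \(\mathrm{Exp}(1)\) killing times of model S2. Second, your counterexample lives on \(K_3\), and the triangle is essential: it is what allows the first type-\(3\) pathogen to sit adjacent to both a type-\(1\) and a type-\(2\) vertex, so that the two genealogies produce literally the same configuration. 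This proves the remark for general \(\mathcal{G}\), which is how it is stated, but it does not address the graphs the paper actually analyzes; on \(\mathbb{T}_d^+\), where each vertex is occupied at most once and the currently type-\(k\) vertices form a subtree whose root's in-neighbour reveals \(\pi(k)\) (or reveals that \(\pi(k)\) is already dead), the genealogical information driving the future is recoverable from \(\eta_t\), and memorylessness then suggests the process there is in fact Markov. So the non-Markov assertion should be understood as a statement about general graphs, or about graphs permitting re-occupation, rather than about \(\mathbb{T}_d^+\) itself; it would be worth saying this explicitly alongside your example.
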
  

\begin{defn} Consider the process $\{\mathcal{G},\lambda,r\}$. 
If all pathogens are eventually removed from $\mathcal{G}$ with probability 1, we say that the process {\it dies out}. Otherwise, we say that the process {\it survives}.
\end{defn}

\begin{obs}  
A high birth rate combined with a low mutation rate leads to many vertices being occupied by pathogens of the same type. As a result, birth mutations become less likely since most vertices are already occupied, making it harder for the pathogen population to survive. Consequently, the survival probability of \(\{\mathcal{G},\lambda,r\}\) is not necessarily a non-decreasing function of \(\lambda\).  
Similarly, at low mutation rates, more vertices can become available when death events occur (i.e., when all pathogens of a given type are eliminated), allowing new pathogens to occupy the vacated spaces. This suggests that the survival probability of \(\{\mathcal{G},\lambda,r\}\) is not necessarily a non-decreasing function of \(r\) either. This behavior contrasts sharply with the non-spatial version of the model \cite[non-spatial model]{GLMR2024}, where the survival probability is a non-decreasing function of both \(\lambda\) and \(r\).  
\end{obs}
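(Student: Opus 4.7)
The observation is a non-monotonicity claim, so the plan is to exhibit explicit parameter regimes on an infinite $d$-ary tree $\bbT_d$ in which the survival probability of $\{\bbT_d,\lambda,r\}$ is strictly larger at some $\lambda_1$ than at some $\lambda_2>\lambda_1$ (and analogously for $r$). I begin with the degenerate reference case $r=0$: with no mutations only type $1$ ever exists, the root's clock (rate $1$) rings at an a.s.\ finite time, and all pathogens die simultaneously, so $P(\lambda,0)=0$ for every $\lambda$. This tells us that for $r$ small the survival probability must also be small unless $\lambda$ is in a regime where mutant lineages can establish themselves before the founder type is eradicated.

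Next I would prove that for fixed small $r$ one has $P(\lambda,r)\to 0$ as $\lambda\to\infty$ by a cascade argument. With probability tending to $1$ as $\lambda\to\infty$, the type-$1$ cluster fills a large ball $B_n$ around the root before the root's clock rings. Mutant births occur inside $B_n$ with density $\approx r$ per birth, but each mutant is trapped: every neighbour is already occupied by type $1$, so no mutant type can spread while its progenitor is alive. When the root's clock finally rings, all type-$1$ vertices are vacated simultaneously and each mutant becomes an isolated pathogen whose own exponential$(1)$ clock starts. Survival thus reduces to survival of a multi-type avalanche in which every exposed isolated pathogen has only a rate-$1$ window to spawn at least one viable further mutation. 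Stochastic domination by a Galton--Watson tree whose offspring mean depends only on $r$ (not on $\lambda$) in the limit should give that for $r$ small the avalanche is subcritical, hence $P(\lambda,r)\to 0$. Combined with a straightforward comparison to a supercritical branching process showing $P(\lambda_0,r)>0$ for some intermediate $\lambda_0$ and the same $r$ (choosable by tuning so that the type-$1$ cluster grows fast enough to generate many mutant subtrees but slowly enough not to saturate the neighbours of each mutant before the wipe-out), this gives non-monotonicity in $\lambda$.

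For non-monotonicity in $r$ I would run a symmetric argument. At $r$ close to $1$ almost every birth is a mutation, so the tree is rapidly populated by many short-lived types, each of which becomes vulnerable to its rate-$1$ clock as soon as its one-step ancestor dies; a direct comparison with a subcritical Galton--Watson process again yields extinction. At an intermediate $r$, a supercritical branching-process comparison gives positive survival probability. The contrast between these two regimes refutes monotonicity in $r$.

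The hard part is making the cascade analysis in the large-$\lambda$ limit rigorous: one has to control, uniformly in $\lambda$, the spatial configuration and number of trapped mutants at the moment of the mass death of type $1$, and then show that the post-wipe-out dynamics are dominated by a subcritical branching process. The tree structure is essential (on $\bbT_d$ there is no lateral escape once a subtree is saturated), and I expect a careful use of the self-similarity of $\bbT_d$, together with stochastic domination by a branching random walk with absorption, to be the technical core of the argument.
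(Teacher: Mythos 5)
The statement you are addressing is a heuristic remark, not a theorem: the paper offers no proof of it, only the informal saturation argument, and the only rigorous backing for it comes from the behaviour of the model on $\mathbb{Z}^d$ established in \cite{GLMR2024} (extinction for large $\lambda$ and small $r$). Note also that the remark asserts only that the survival probability is ``not necessarily'' non-decreasing, i.e.\ that the obvious monotone couplings fail; it does not claim strict non-monotonicity on any particular graph. Your proposal aims at the much stronger claim of genuine non-monotonicity on $\mathbb{T}_d$, and its two central assertions are contradicted by the paper's own results.

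First, you claim that for fixed small $r$ the survival probability on $\mathbb{T}_d$ tends to $0$ as $\lambda\to\infty$, via domination by a subcritical cascade (which would in fact force it to equal $0$ for large $\lambda$). But Corollary~\ref{C:arbol}, through Theorem~\ref{T:arbol_dirigido} and the coupling in which $\{\mathbb{T}_d,\lambda,r\}$ dominates $\{\mathbb{T}_d^+,\lambda,r\}$, gives survival for every $\lambda>\lambda_c(d,r)=\bigl(\sqrt{d-1}+\sqrt{rd}\bigr)^{-2}$, a finite threshold, so the process survives for all large $\lambda$. The flaw in the cascade is the step asserting that each exposed mutant has ``only a rate-$1$ window to spawn at least one viable further mutation'' with an offspring mean depending only on $r$: once its progenitor type dies, an exposed mutant's type grows a same-type cluster at a speed governed by $\lambda$ during its $\exp(1)$ lifetime, and every birth in that cluster is a fresh (trapped but protected) mutation with probability $r$; on a tree the cluster volume, hence the expected number of new mutants per generation of the avalanche, diverges as $\lambda\to\infty$, making the cascade supercritical rather than subcritical. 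The saturation heuristic only bites on graphs of subexponential growth such as $\mathbb{Z}^d$, where a death event vacates a region whose boundary is small relative to its volume. Second, your claim that $r$ close to $1$ forces extinction contradicts Proposition~\ref{monotonia} combined with Theorem~\ref{T:arbol_dirigido}$(ii)$: on $\mathbb{T}_d^+$ the survival probability is non-decreasing in $r$, and at $r=1$ the process survives whenever $\lambda>(\sqrt{d}-\sqrt{d-1})^{2}$. If you want a rigorous instance of the phenomenon the remark describes, the correct setting is $\mathbb{Z}^d$, quoting \cite{GLMR2024}, not the trees studied in this paper.
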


In this work, we study the model $\{\mathcal{G}, \lambda, r\}$ on two types of graphs. The first one, $\mathcal{G} = \mathbb{T}_d$ with $d \geq 1$, is an infinite homogeneous rooted tree in which each vertex has $d+1$ nearest neighbors. 
The second one, $\mathcal{G} = \mathbb{T}_d^+$, is an infinite directed rooted tree where each vertex has $d$ out-neighbors and one in-neighbor, except for the root, which has only $d$ out-neighbors. Naturally, the dynamics of the model $\{\mathbb{T}_d, \lambda, r\}$ are more complex than those of $\{\mathbb{T}_d^+, \lambda, r\}$. In $\{\mathbb{T}_d^+, \lambda, r\}$, a vertex can be occupied by a pathogen only once; once the pathogen dies, the vertex remains empty forever. Additionally, a pathogen born at a site \( x \) can only be a descendant of a pathogen located at a neighboring vertex closer to the root than \( x \). In contrast, in $\{\mathbb{T}_d, \lambda, r\}$, a vertex can be occupied multiple times by different pathogens, with their progenitors potentially located at any neighboring vertex. Due to the specific dynamics of the model on directed trees, the following monotonicity property is satisfied.

\begin{prop}\label{monotonia}
The survival probability in \(\{\mathbb{T}_d^+, \lambda, r\}\) is a non-decreasing function of both \(\lambda\) and \(r\).
\end{prop}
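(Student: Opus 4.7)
The plan is to build, on a single probability space, a coupling that simultaneously realises the process $\{\mathbb{T}_d^+,\lambda,r\}$ for every choice of $(\lambda,r)$, and to read off monotonicity of the set of ever-occupied vertices directly from the construction. What makes such a deterministic coupling possible is the directed structure of $\mathbb{T}_d^+$: every vertex has a unique potential progenitor (its in-neighbour) and can be occupied at most once, so the whole process is a deterministic functional of one intrinsic random input per vertex.

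Concretely, I would attach to each vertex $v$ of $\mathbb{T}_d^+$ three mutually independent random variables $E_v\sim\mathrm{Exp}(1)$, $U_v\sim\mathrm{Unif}[0,1]$ and $K_v\sim\mathrm{Exp}(1)$, jointly independent across $v$. Given $(\lambda,r)$ I would then define recursively along the tree: (a) the birth time $\hat\tau_v=\hat\tau_{\mathrm{par}(v)}+E_v/\lambda$, with $\hat\tau_{\mathrm{root}}=0$; (b) the type $\tau(v)=v$ if $v$ is the root or $U_v<r$, and $\tau(v)=\tau(\mathrm{par}(v))$ otherwise; (c) a nominal lifespan $\hat D_k$ of a potential type $k$ (i.e.\ $k=\mathrm{root}$ or $U_k<r$) by $\hat D_{\mathrm{root}}=K_{\mathrm{root}}$ and $\hat D_k=\hat D_{\tau(\mathrm{par}(k))}+K_k$. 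Call $v$ \emph{live} if $\hat\tau_u<\hat D_{\tau(\mathrm{par}(u))}$ holds for every non-root ancestor $u$ of $v$ (including $v$ itself). A short verification using exponential races shows that, for each fixed $(\lambda,r)$, the set of live vertices has the law of the set of vertices ever occupied in $\{\mathbb{T}_d^+,\lambda,r\}$.

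The two monotonicities are then almost immediate. Increasing $\lambda$ with $(E,U,K)$ frozen leaves the types $\tau(v)$ and the lifespans $\hat D_k$ (which depend only on $U$ and $K$) unchanged and decreases every $\hat\tau_v$, so every liveness inequality that held for the smaller $\lambda$ still holds. Increasing $r$ with $(E,U,K)$ frozen leaves the $\hat\tau_v$ unchanged and enlarges the mutation set $\{u : U_u<r\}$; expressing $\hat D_{\tau(\mathrm{par}(u))}$ as the sum of $K$-values indexed by the mutations (together with the root) on the ancestral path up to $\mathrm{par}(u)$ makes this sum monotone under set inclusion, so every relevant $\hat D$ grows and all liveness inequalities are preserved. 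Since survival is the (increasing) event that infinitely many vertices are live, its probability is non-decreasing in each parameter.

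The main obstacle will be the $r$-direction: one has to keep straight that when $r$ grows the label $\tau(\mathrm{par}(u))$ on the right-hand side of the liveness inequality also changes, and then verify that the new lifespan genuinely dominates the old one. Writing $\hat D$ as an explicit sum over mutation-indices on the ancestral path makes this concrete and reduces the argument to the trivial fact that sums over larger index sets are larger.
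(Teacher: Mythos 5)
Your proof is correct, and it takes a genuinely different route from the paper's. The paper runs a dynamic two-process coupling: it first treats \(d=1\) (the ray \(\mathbb{N}_0^+\)), realizing the births of the \((\lambda_2,r_2)\)-process as a rate-\(\lambda_2\) clock thinned by \(\lambda_1/\lambda_2\) to drive the \((\lambda_1,r_1)\)-process, sharing a single uniform for each mutation decision and a single rate-1 Poisson process for the deaths (using that along a ray the types die in order, so it is always ``the smallest type present'' that is removed), and then extends to \(\mathbb{T}_d^+\) by arguing branch by branch. You instead build one static random field \((E_v,U_v,K_v)_v\) and exhibit the ever-occupied set for \emph{every} \((\lambda,r)\) as a deterministic, coordinatewise-monotone functional of it: the birth times \(\hat\tau_v=\sum E_u/\lambda\) decrease in \(\lambda\) while the death times \(\hat D_{\tau(\mathrm{par}(u))}=K_{\mathrm{root}}+\sum_{m\preceq \mathrm{par}(u),\,U_m<r}K_m\) do not depend on \(\lambda\) and increase in \(r\) by set inclusion. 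This buys you several things: both monotonicities come from the same construction, the argument works directly on \(\mathbb{T}_d^+\) for all \(d\) without the branch-wise reduction (whose shared-vertex bookkeeping the paper leaves informal), and the delicate ``same death Poisson process / smallest type present'' step disappears because you attach each type's killing clock to the vertex where that type is created. Your identification of the death time as a sum of \(K\)-values over the mutation vertices on the ancestral path is exactly the right resolution of the one genuine subtlety (that \(\tau(\mathrm{par}(u))\) itself changes with \(r\)). The only point worth a sentence in a written version is the standard identification of survival with the event that the live set is infinite (no explosion, and every type dies in a.s.\ finite time), which both you and the paper use implicitly.
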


Proposition~\ref{monotonia} allows us to establish the following results on phase transitions for the survival probability in the model \(\{\mathbb{T}_d^+, \lambda, r\}\).

\begin{theorem}\label{T:arbol_dirigido}   
Consider the process \(\{\mathbb{T}_d^+, \lambda, r\}\) for $d\geq1.$ Define 
\begin{equation}\label{lambda_c}
  \lambda_c(d,r) := \left[\frac{\sqrt{d-1}-\sqrt{rd}}{d(1-r)-1}\right]^{2}.
\end{equation}

\begin{itemize}
    \item[$(i)$] If $\lambda<\lambda_c(d,r) $ then the process $\{\mathbb{T}_d^+,\lambda,r\}$ dies out.  
    \item[$(ii)$] If $\lambda>\lambda_c(d,r) $ then the process $\{\mathbb{T}_d^+,\lambda,r\}$ survives. 
\end{itemize}
\end{theorem}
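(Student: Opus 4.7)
The plan is to analyse the first moment of $N_n$, the number of vertices at depth $n$ in $\mathbb{T}_d^+$ that are ever occupied: extinction in~(i) will follow from Borel--Cantelli once the first moment is shown to decay geometrically, and survival in~(ii) will follow by combining the exponential growth of the first moment with a Paley--Zygmund second-moment argument.

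The key structural observation is that on $\mathbb{T}_d^+$ a vertex is occupied at most once, so for a fixed $v$ at depth $n$ the event $\{v \text{ is ever occupied}\}$ depends only on what happens along the unique directed path $v_0, v_1, \ldots, v_n = v$ from the root. Along this path I would introduce iid $E_i \sim \exp(\lambda)$ (inter-birth waiting times), iid $I_i \sim \mathrm{Bernoulli}(r)$ (mutation indicators), and iid $Y_j \sim \exp(1)$ (one clock for the root type and one for each new type created along the path). Setting $L_0 = Y_0$ and $L_i = L_{i-1} - E_i + Y_i I_i$, a repeated application of the memoryless property of the exponential at each mutation shows that $v$ is ever occupied iff $Z_i := L_{i-1} - E_i > 0$ for every $i = 1, \ldots, n$, where $Z_1 = Y_0 - E_1$ and the increments $Z_{i+1} - Z_i = Y_i I_i - E_{i+1}$ are iid with moment generating function
\[
M_X(\theta) = \frac{\lambda\bigl(1 - (1-r)\theta\bigr)}{(1-\theta)(\lambda + \theta)}, \qquad \theta \in (-\lambda, 1).
\]

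For~(i), Markov's inequality gives $\mathbb{P}(v \text{ occupied}) \le \mathbb{P}(Z_n > 0) \le C_\theta\, M_X(\theta)^n$ for any admissible $\theta > 0$. Optimising $(\log M_X)'(\theta) = 0$ reduces to the quadratic $(1-r)\theta^2 - 2\theta + (1 - \lambda r) = 0$, whose root in $(0,1)$ produces
\[
\inf_{\theta \in (0,1)} M_X(\theta) = \frac{\lambda(1-r)^2}{\bigl(\sqrt{1 + (1-r)\lambda} - \sqrt{r}\,\bigr)^2}.
\]
Combining this with the algebraic identity $\lambda_c = 1/(\sqrt{d-1} + \sqrt{dr})^2$ (obtained by factorising $d(1-r) - 1 = (\sqrt{d-1} - \sqrt{dr})(\sqrt{d-1} + \sqrt{dr})$ in the definition of $\lambda_c$), one verifies $d \cdot \inf_\theta M_X(\theta) = 1$ precisely at $\lambda = \lambda_c$. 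Hence for $\lambda < \lambda_c$, $\mathbb{E}[N_n] = d^n \mathbb{P}(v \text{ occupied})$ decays geometrically, $\sum_n \mathbb{E}[N_n] < \infty$, so almost surely only finitely many vertices are ever occupied; since each pathogen has finite lifetime, the process dies out.

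For~(ii), the same calculation gives $d \cdot \inf_\theta M_X(\theta) > 1$ and hence $\mathbb{E}[N_n] \to \infty$ exponentially whenever $\lambda > \lambda_c$. To upgrade this to positive survival probability I would apply Paley--Zygmund, $\mathbb{P}(N_n > 0) \ge \mathbb{E}[N_n]^2 / \mathbb{E}[N_n^2]$, and decompose $\mathbb{E}[N_n^2]$ over pairs $(v,w)$ of depth-$n$ vertices by the depth $k$ of their most recent common ancestor $u$. The structural feature of $\mathbb{T}_d^+$ that makes this tractable is that, conditional on the type and remaining lifetime of $u$, the two sub-processes on the disjoint subtrees below $u$ are independent, so the joint occupation probability factorises as the square of a conditional first moment and the sum over $k$ becomes a geometric series. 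Once $\mathbb{P}(N_n > 0) \ge c > 0$ uniformly in $n$, the inclusion $\{N_{n+1} > 0\} \subseteq \{N_n > 0\}$ gives $\mathbb{P}(\mathrm{survival}) \ge \lim_n \mathbb{P}(N_n > 0) > 0$. The main technical obstacle will be to make the second-moment bound uniform in $n$ throughout the supercritical regime: the naive Chernoff estimate of the conditional occupation probabilities yields a geometric series in $(e^{2I}/d)^k$ with $I = -\log \inf_\theta M_X(\theta)$, which converges only when $\lambda$ is well above $\lambda_c$, so to cover $\lambda$ close to $\lambda_c$ one must refine the argument, either by a size-biased (many-to-one) random walk tilt or by embedding a supercritical Galton--Watson process of ``good'' descendants (those occupied with remaining lifetime exceeding a fixed threshold), whose mean offspring can be made strictly greater than $1$ by taking the generation depth large, combined with the monotonicity of Proposition~\ref{monotonia}.
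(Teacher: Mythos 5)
Your part (i) is essentially the paper's proof: the same path decomposition, the same random walk $Z_i = K_iI_i - E_i$ with the same moment generating function $\frac{\lambda}{\lambda+u}\left(1+\frac{ru}{1-u}\right)$, the same Chernoff optimisation, and the same conclusion via summability of $\mathbb{E}[N_n]$. That half is fine.

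For part (ii) there is a genuine gap, and it sits exactly where you wave your hands. You assert that $\mathbb{E}[N_n]\to\infty$ exponentially when $d\cdot\inf_\theta M_X(\theta)>1$, but the only estimate you have produced is the Chernoff \emph{upper} bound $\mathbb{P}(Z_j>0,\ j\le n)\le M_X(\theta)^n$; the fact that this upper bound fails to beat $d^{-n}$ says nothing about the actual size of $\mathbb{E}[N_n]$. What is needed is the matching \emph{lower} bound
\[
\liminf_{n\to\infty}\tfrac{1}{n}\log \mathbb{P}\Bigl(\textstyle\sum_{i=1}^{j}X_i>0,\ j=1,\dots,n\Bigr)\ \ge\ \log\inf_{\theta}M_X(\theta),
\]
a nontrivial large-deviation statement for a random walk constrained to stay positive for $n$ steps (this is Lemma 1 of Aldous and Krebs, quoted as Lemma 3.1 in the paper, and it requires $\mathbb{E}[X]<0$, i.e.\ $r\lambda<1$; the case $r\lambda\ge1$ is then handled by the monotonicity of Proposition 2.1). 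Without this lemma, neither your claim that the first moment grows nor your claim that the mean offspring of the embedded Galton--Watson process exceeds $1$ for large generation depth is justified. Your primary route (Paley--Zygmund) has the further problems you yourself flag --- the second-moment geometric series does not obviously converge near $\lambda_c$ --- and also a structural one: because a type's killing clock only starts when its whole ancestral type-chain has died, the subtrees hanging off two depth-$n$ vertices are not conditionally independent given merely the ``remaining lifetime'' of their common ancestor; the correct regeneration points are mutations all of whose ancestors are already dead. The paper exploits exactly this: it defines ``special particles'' (such mutations at depths $nk$), makes their subtrees i.i.d.\ by passing to the $*$-variant in which the root's killing time is itself the mixed variable, and shows via the Aldous--Krebs lower bound that $\mathbb{E}[W_1]=d^k r\,\mathbb{P}(\cdots)>1$ for $k$ large. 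So your fallback sketch points in the right direction, but the proof as proposed is missing its key lemma and the precise regeneration structure.
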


Note that the non-spatial model defined in \cite{GLMR2024}, constrained such that each pathogen can generate at most $d$ new pathogens, corresponds to the model $\{\mathbb{T}_d^+,\lambda/d,r\}$. According to Theorem \ref{T:arbol_dirigido}, the critical parameter for this model is given by $\lambda'_c(d,r) = d\lambda_c(d,r)$, from which it follows that $\lambda'_c(d,r) \to (1+\sqrt{r})^{-2}$ as $d \to \infty$. This result aligns with \cite[Theorem 2.2]{GLMR2024}.\\

Using the Theorem~\ref{T:arbol_dirigido} and an appropriate coupling, we obtain as corollary our main result. 

\begin{cor}\label{C:arbol}  
Consider the process \(\{\mathbb{T}_d, \lambda, r\}\) for \(d \geq 1\), and \(\lambda_c(d, r)\)  defined as in (\ref{lambda_c}).  
\begin{itemize}
    \item[$(i)$] If $\lambda<\lambda_c(d+1,r)$ then the process $\{\mathbb{T}_d,\lambda,r\}$ dies out.
    \item[$(ii)$] If $\lambda>\lambda_c(d,r)$ then the process $\{\mathbb{T}_d,\lambda,r\}$ survives. 
\end{itemize}
\end{cor}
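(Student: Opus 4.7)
The plan is to derive both parts from Theorem~\ref{T:arbol_dirigido} through stochastic couplings of $\{\mathbb{T}_d,\lambda,r\}$ with directed tree processes. A key structural observation used in both couplings is that in $\{\mathbb{T}_d,\lambda,r\}$ every pathogen has at most $d+1$ children in its family tree, at most one per neighbor direction. Indeed, if a pathogen $p$ at $x$ produces a pathogen $q$ at a neighbor $y$, then $q$'s type is either identical to or a mutation of $p$'s, and the killing-clock rule forces $q$'s type to die no earlier than $p$'s; thus $y$ remains occupied throughout the rest of $p$'s lifetime, precluding a second successful birth of $p$ at $y$.

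For part~(ii), let $v$ be any child of the root $o$ of $\mathbb{T}_d$ and let $T_v$ be the subtree of $\mathbb{T}_d$ rooted at $v$ with edges oriented away from $o$. Every vertex of $T_v$ has $d$ out-neighbors in $T_v$, so $T_v$ is isomorphic as a directed tree to $\mathbb{T}_d^+$. Inside $\{\mathbb{T}_d,\lambda,r\}$, consider the sub-structure obtained by following only births going from parent to child inside $T_v$, starting from the first pathogen that arrives at $v$; if this sub-structure survives, then $\{\mathbb{T}_d,\lambda,r\}$ survives. Condition on the positive-probability event that $o$ gives birth to $v$ without mutation before its type-$1$ clock rings. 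By the memoryless property of the exponential, the remaining lifetime of type~$1$ measured from $v$'s birth is an independent $\mathrm{Exp}(1)$ clock, so that the sub-structure on $T_v$ started from $v$'s birth is distributed exactly as $\{\mathbb{T}_d^+,\lambda,r\}$. Theorem~\ref{T:arbol_dirigido} then yields positive survival probability for $\lambda>\lambda_c(d,r)$, and the claim follows.

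For part~(i), we embed the family tree of $\{\mathbb{T}_d,\lambda,r\}$ into $\mathbb{T}_{d+1}^+$: after fixing a labeling $1,\ldots,d+1$ of the neighbors at each vertex of $\mathbb{T}_d$ and of the out-neighbors at each vertex of $\mathbb{T}_{d+1}^+$, the initial pathogen is mapped to the root of $\mathbb{T}_{d+1}^+$ and each child of a pathogen born in direction $i$ is mapped to the $i$-th out-neighbor of its image. Couple the two processes so that, for each pathogen and its image, the Poisson birth processes in the $d+1$ directions are identified and the marks (mutation indicator and killing clock) of the first event in each direction are shared; since the marks across distinct events are i.i.d., this coupling preserves the law of $\{\mathbb{T}_d,\lambda,r\}$. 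Every successful birth of $\{\mathbb{T}_d,\lambda,r\}$ in direction $i$ forces the first event of the corresponding Poisson process to occur during the parent's lifetime, which in turn produces a successful birth of $\{\mathbb{T}_{d+1}^+,\lambda,r\}$ in direction $i$ (the latter never blocks), and the image pathogen has a lifetime at least as long. Thus the family tree of $\{\mathbb{T}_d,\lambda,r\}$ is contained in that of $\{\mathbb{T}_{d+1}^+,\lambda,r\}$, and extinction of the latter (guaranteed by Theorem~\ref{T:arbol_dirigido} for $\lambda<\lambda_c(d+1,r)$) transfers to the former. The main technical difficulty in both parts is tracking the chained killing clocks through the couplings, since a pathogen's lifetime is not independent of the lifetimes of its ancestral types.
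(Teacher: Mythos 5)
Your proof is correct and follows essentially the same strategy as the paper: sandwiching $\{\mathbb{T}_d,\lambda,r\}$ between $\{\mathbb{T}_d^+,\lambda,r\}$ and $\{\mathbb{T}_{d+1}^+,\lambda,r\}$ via two couplings and then invoking Theorem~\ref{T:arbol_dirigido}. Your variants --- rooting the embedded directed process at a child of the root with a conditioning step in part (ii), and phrasing part (i) as a family-tree embedding into $\mathbb{T}_{d+1}^+$ --- are minor reworkings of the paper's two couplings, and your justification of why the relevant births are never blocked (children's types outlive their parents' types, hence at most one child per direction) is in fact more explicit than the paper's.
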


\begin{obs}
Liggett et al.~\cite{LSS2008} studied the spatial version of Model S2, introduced by Schinazi and Schweinsberg~\cite{SS2008}, on the tree \(\mathbb{T}_d\), with \( d \geq 2 \). They established that the process survives for all \( r \in (0,1] \) if \( \lambda > \frac{1}{d-1} \) and dies out if \( \lambda \leq \frac{1}{d-1+2r} \). 
On the other hand, according to Corollary \ref{C:arbol}, our model \(\{\mathbb{T}_d, \lambda, r\}\), with \( d \geq 2 \), survives when \( \lambda > \lambda_c(d) = \frac{1}{(\sqrt{d-1}+ \sqrt{rd})^2} \) and dies out if \( \lambda < \lambda_c(d+1) = \frac{1}{(\sqrt{d}+ \sqrt{r(d+1)})^2} \). Therefore, for \( d \geq 2 \), we observe that in the range \( \frac{1}{(\sqrt{d-1}+ \sqrt{rd})^2} < \lambda \leq \frac{1}{d-1+2r} \), Model S2 on \(\mathbb{T}_d\) dies out, whereas our model \(\{\mathbb{T}_d, \lambda, r\}\) survives. See Figure \ref{figura1} for a graphical representation. 
This contrast highlights an interesting phase transition, emphasizing how a mechanism involving stronger mutations can significantly influence large-scale behavior, where seemingly minor differences in the dynamics can lead to drastically different macroscopic outcomes.
\end{obs}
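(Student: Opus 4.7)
The final statement asserts that for $d\geq 2$ and $r\in(0,1]$, the interval
$$\left(\frac{1}{(\sqrt{d-1}+\sqrt{rd})^{2}},\ \frac{1}{d-1+2r}\right]$$
is non-empty, and on it Model~S2 is known to die out while $\{\mathbb{T}_d,\lambda,r\}$ survives by Corollary~\ref{C:arbol}. My plan is to break the verification into three short steps that reduce the remark to an elementary sign check.

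First, I would rewrite the critical value $\lambda_c(d,r)$ from \eqref{lambda_c} in the simpler form $1/(\sqrt{d-1}+\sqrt{rd})^{2}$ used in the remark. The key observation is that $d(1-r)-1=(d-1)-rd$ is a difference of squares,
$$d(1-r)-1=(\sqrt{d-1}-\sqrt{rd})(\sqrt{d-1}+\sqrt{rd}),$$
so the numerator $\sqrt{d-1}-\sqrt{rd}$ in \eqref{lambda_c} cancels and yields $\lambda_c(d,r)=1/(\sqrt{d-1}+\sqrt{rd})^{2}$. One must note in passing that $\sqrt{d-1}-\sqrt{rd}$ and $d(1-r)-1$ carry the same sign, so the cancellation is valid in the relevant parameter range.

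Second, I would assemble the two thresholds to be compared: from Liggett et al.~\cite{LSS2008}, Model~S2 on $\mathbb{T}_d$ dies out whenever $\lambda\leq 1/(d-1+2r)$; and from Corollary~\ref{C:arbol}(ii) combined with Step~1, the process $\{\mathbb{T}_d,\lambda,r\}$ survives whenever $\lambda>1/(\sqrt{d-1}+\sqrt{rd})^{2}$. Placing these side by side, the remark is reduced to the algebraic claim
$$(\sqrt{d-1}+\sqrt{rd})^{2}>d-1+2r \quad\text{for all } d\geq 2,\ r\in(0,1].$$
Expanding the left-hand side and cancelling the common $d-1$, the inequality becomes $2\sqrt{rd(d-1)}+rd>2r$, i.e.\ $2\sqrt{rd(d-1)}>r(2-d)$. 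For $d\geq 2$ the right-hand side is non-positive, while for $r>0$ and $d\geq 2$ the left-hand side is strictly positive. The inequality therefore holds, and the claimed interval is non-degenerate.

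No genuine obstacle appears here: the remark is a bookkeeping comparison between two thresholds established elsewhere (one in \cite{LSS2008}, one in Corollary~\ref{C:arbol}), and the substance of the argument is the algebraic simplification of $\lambda_c(d,r)$ in Step~1. What merits emphasis once the verification is complete is the qualitative conclusion: despite the non-monotone dependence of survival on $r$ noted earlier in the paper, the stronger-mutation mechanism strictly enlarges the survival region relative to Model~S2 for every $d\geq 2$, which is precisely the phenomenon the remark is highlighting.
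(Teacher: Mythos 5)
Your verification is correct and is essentially the only argument available: the remark is a direct bookkeeping comparison of the extinction threshold from Liggett et al.\ with the survival threshold from Corollary~\ref{C:arbol}, and your two algebraic steps (the difference-of-squares simplification $\lambda_c(d,r)=1/(\sqrt{d-1}+\sqrt{rd})^{2}$ and the inequality $(\sqrt{d-1}+\sqrt{rd})^{2}>d-1+2r$ for $d\geq 2$, $r\in(0,1]$) supply exactly the computation the paper leaves implicit. The only caveat is your closing phrase ``strictly enlarges the survival region'': the comparison exhibits a nonempty parameter region where Model~S2 dies out while $\{\mathbb{T}_d,\lambda,r\}$ survives, but it does not show that the survival region of one model contains that of the other, since both sets of results leave inconclusive zones (regions (II) and (IV) in Figure~\ref{figura1}).
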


\begin{figure}[ht]
	\begin{tabular}{ccc}
		$\lambda$ & \parbox[c]{10cm}{\includegraphics[trim={0cm 0cm 0cm 0cm}, clip, width=10cm]{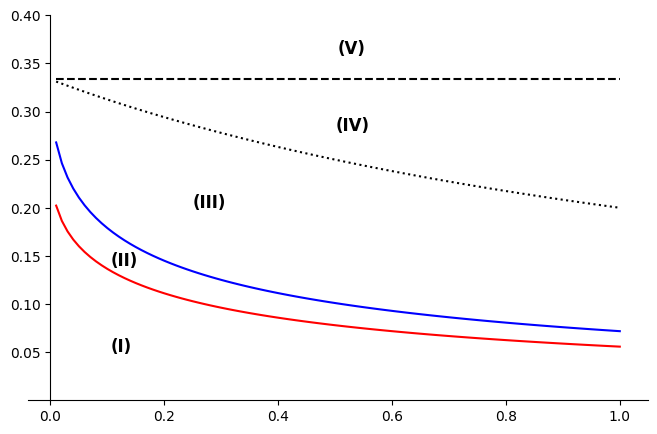}} & 
		\begin{tabular}{l}
			\textbf{\textcolor{red}{------}} $\lambda=\frac{1}{(\sqrt{d}+\sqrt{r(d+1)})^2}$\\ \\
			\textbf{\textcolor{blue}{------}} $\lambda=\frac{1}{(\sqrt{d-1}+\sqrt{rd})^2}$\\ \\
			\textbf{-\,-\,-\,-\,-} $\lambda=\frac{1}{d-1}$\\ \\
			\textbf{$\cdots \cdots$} $\lambda=\frac{1}{d-1+2r}$
		\end{tabular} \\
		& $r$
	\end{tabular}
	\caption{Model S2 on \(\mathbb{T}_4\) vs Model \(\{\mathbb{T}_4, \lambda, r\}\).   
In region (I), both models die out.  
In region (II), Model S2 dies out, while the behavior of Model \(\{\mathbb{T}_4, \lambda, r\}\) remains inconclusive.  
In region (III), Model S2 dies out, whereas Model \(\{\mathbb{T}_4, \lambda, r\}\) survives. In region (IV), Model \(\{\mathbb{T}_4, \lambda, r\}\) survives, while the behavior of Model S2 is inconclusive.  
In region (V), both models survive.  } 
	\label{figura1}
\end{figure}

In Figure \ref{figura2}, we illustrate the behavior of Model \(\{\mathbb{T}_4, \lambda, r\}\) based on Monte Carlo simulations.  These experiments allow us to examine more closely the dynamics of the model in region (II) of Figure \ref{figura1}, where analytical results were inconclusive. We considered values $r\in [0.01,1]$ (in steps of $0.02$) and $\lambda\in [0.01,0.4]$ (in steps of $0.01$), and for each parameter pair $(r,\lambda)$ we performed 30000 independent realizations of the process. 

To ensure comparability between parameter regimes and to avoid unbounded growth in supercritical cases, each simulation was truncated when either the total propagation time reached 30 or the number of explored nodes exceeded 3500. These thresholds do not affect extinction outcomes but provide a practical stopping criterion once survival becomes highly probable. Each point in Figure \ref{figura2} represents the empirical outcome for a specific parameter pair 
$(r,\lambda)$. Pink circles indicate total extinction in all 30000 realizations. The exclusive presence of pink circles in Region~(II) indicates that the process becomes extinct or has an extremely small probability of survival throughout this region. Nevertheless, a more exhaustive simulation study would be required to draw definitive conclusions about the survival–extinction behavior within Region~(II). All the computations were conducted in the R software \cite{R}.

\begin{figure}[ht]
	\includegraphics[trim={0cm 0cm 0cm 0cm}, clip, width=16cm]{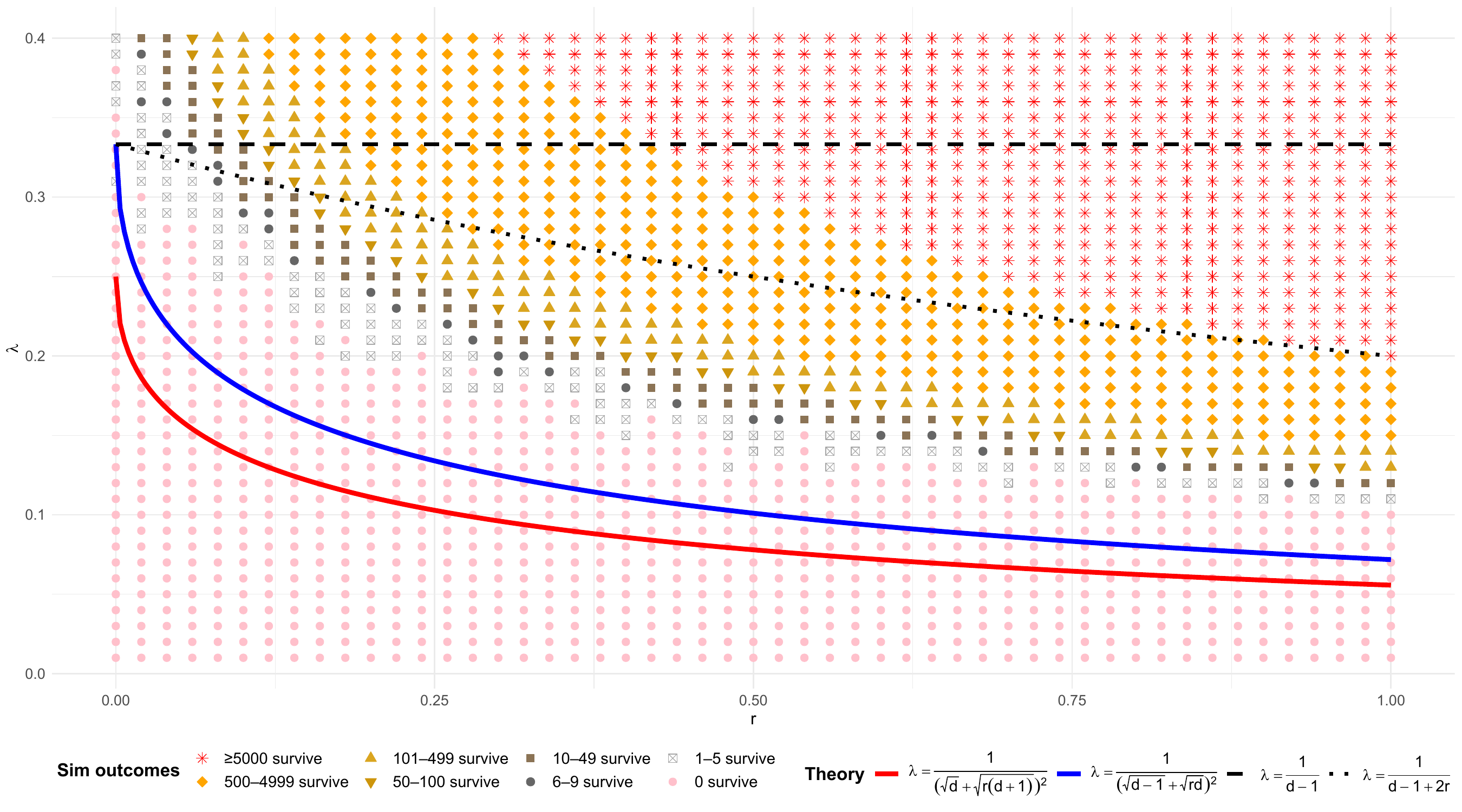}	\caption{ Monte Carlo simulation outcomes for Model $\{\mathbb{T}_4, \lambda, r\}$. 
        Each point corresponds to the empirical result of 30000 independent realizations 
        for a given pair of parameters $(r,\lambda)$, with $r\in[0.01,1]$ and $\lambda\in[0.01,0.4]$. 
        Colors and symbols represent the number of surviving realizations: 
        pink circles denote total extinction (0 surviving runs), while grey to red symbols 
        indicate increasing survival levels (1--5, 6--9, 10--49, 50--100, 101--499, 500--4999, and $\ge 5000$ surviving realizations). The same
        theoretical thresholds for survival from Figure \ref{figura1} are shown for comparison.    
 } 
	\label{figura2}
\end{figure}

\section{Proofs}\label{proofs}

\begin{proof}[Proof of Proposition~\ref{monotonia}] For simplicity, we present the proof for \(d=1\), i.e., for the model \(\{\mathbb{N}_0^+, \lambda, r\}\), where \(\mathbb{N}_0^+\) denotes the directed graph with vertices \(\{0, 1, 2, \dots\}\) and edges \(\{(i, i+1) : i \geq 0\}\), where \((i, i+1)\) indicates a directed edge from vertex \(i\) to \(i+1\).

Let \(0<\lambda_1 < \lambda_2\) and $0<r_1<r_2<1$. We denote by \(\eta_t^1\) and \(\eta_t^2\) the processes \(\{\mathbb{N}_0^+, \lambda_1, r_1\}\) and \(\{\mathbb{N}_0^+, \lambda_2, r_2\}\), respectively. Since in each of these models the creation of a new pathogen is only possible at the neighboring vertex of the furthest vertex from the root of $\mathbb{N}_0^+$ currently occupied by a pathogen, it is possible to jointly construct \(\eta_t^1\) and \(\eta_t^2\) as follows.

Let \(M_i := \sup\{k \geq 0 : \eta_t^i(k) \neq 0\}\) represent the position of the vertex farthest from the root that is occupied by a pathogen at time \(t\) in the process \(\eta_t^i\), for \(i=1,2\). In the process \(\eta^2\), after an exponentially distributed time with rate \(\lambda_2\), the pathogen located at vertex \(M_2\) generates a new pathogen at vertex \(M_2 + 1\). Associated with the same exponential time, with probability \(\lambda_1/\lambda_2\), in the process \(\eta^1\), the pathogen located at \(M_1\) creates a new pathogen at vertex \(M_1 + 1\). 

In the process \(\eta^2\), when a new pathogen is born, an independent random variable \(U \sim \text{UNIF}(0,1)\) is drawn. If \(U > r_2\), the new pathogen is of the same type as the ancestral pathogen; otherwise, the new pathogen is of type \(k+1\), where \(k\) is the type of the ancestral pathogen. If a corresponding birth occurs in the process \(\eta^1\), we use the same variable \(U\), and if \(U > r_1\), the new pathogen in \(\eta^1\) is of the same type as its ancestral pathogen; otherwise, the new pathogen is of type \(l+1\), where \(l\) is the type of the ancestral pathogen.

The death events in both processes, $\eta^1$ and $\eta^2$, follow the same Poisson process with rate 1. At the occurrence times of the Poisson process, all pathogens of the smallest type present in each process, $\eta^1$ and $\eta^2$, are eliminated. 

The previous construction demonstrates that it is possible to couple the processes \(\{\mathbb{N}_0^+, \lambda_1, r_1\}\) and \(\{\mathbb{N}_0^+, \lambda_2, r_2\}\) in such a way that, at all times, the number of pathogens (and pathogen types) in \(\{\mathbb{N}_0^+, \lambda_2, r_2\}\) is always greater than or equal to the number of pathogens (and pathogen types) in \(\{\mathbb{N}_0^+, \lambda_1, r_1\}\).
Thus, the survival probability of \(\{\mathbb{N}_0^+, \lambda_2, r_2\}\) is greater than or equal to the survival probability of \(\{\mathbb{N}_0^+, \lambda_1, r_1\}\).

The arguments presented above can be extended to \(\{\mathbb{T}_d^+, \lambda, r\}\), with \(d \geq 1\). It is enough to observe that the number of pathogens present along each branch (an infinite path starting from the root) of \(\mathbb{T}_d^+\) behaves in the same way as in the process \(\{\mathbb{N}_0^+, \lambda, r\}\).

\end{proof}

The proof below adapts a strategy presented by Aldous and Krebs \cite{aldous} for the survival and extinction of the birth-and-assassination process.

\begin{proof}[Proof of Theorem \ref{T:arbol_dirigido} $(i)$]
Let \(\{ B_i \}_{i \geq 1}\) be independent exponential random variables with rate \(\lambda\), and let \(\{K_i\}_{i \geq 1}\) be a sequence of independent random variables such that \(K_1\) follows an exponential distribution with rate 1. For \(i \geq 2\), the variables \(K_i\) are mixed random variables, being zero with probability \(1 - r\) and following an exponential distribution with rate 1 with probability \(r\).    
Then, the probability that a pathogen is born at a given vertex \(\nu\) at the $k$-th level of $\mathbb{T}_{d}^+$ is equal to
	$$\mathbb{P}\left(\sum_{i=1}^{j} B_i < \sum_{i=1}^j K_i,~j=1,...,k\right).$$ 
To see this, notice that the event
\(
\Bigl\{\sum_{i=1}^{j} B_i < \sum_{i=1}^j K_i,\; j = 1, \dots, k\Bigr\}
\)
ensures that, at each intermediate level \(j\) along the path from the root to the vertex \(\nu\), a pathogen is produced before its progenitor at level \(j-1\) dies.  In particular, the inequality \(B_1 < K_1\) guarantees that the pathogen at the root creates a descendant at level 1 before dying.  The subsequent inequalities, such as \(B_1 + B_2 < K_1 + K_2\), ensure that, cumulatively, the infectious lineage survives up to the second level, and so on, up to level \(k\).

Note that, this probability is the same for every vertex at the $k$-th level and that, there are $d^k$ such vertices. Thus,\\

\noindent
$\mathbb{E}\left(\mbox{total number of pathogens born in } \mathbb{T}_{d}^+\right)=$
\begin{eqnarray*}\label{cheby}
&=&\sum_{v\in \mathbb{T}_d^+}\mathbb{E}[\mathbb{I} \{\mbox{a pathogen is born at}~v\}] \\
&=&\sum_{k\geq 1}d^k\mathbb{P}\left(\sum_{i=1}^j B_i < \sum_{i=1}^j K_i,~j=1,...,k\right)  \\
&\leq &\sum_{k\geq 1}d^k\mathbb{P}\left(\sum_{i=1}^k B_i < \sum_{i=1}^k K_i\right) \\
&\leq &\sum_{k \geq 1}{d^k} \mathbb{E}\left(e^{u(\sum_{i=1}^k K_i-\sum_{i=1}^k B_i)}\right),  \\
&= &\frac{\lambda d}{(\lambda + u)(1-u)}\sum_{k\geq 1}\left[\frac{\lambda d}{\lambda + u}\left(1+\frac{ru}{1-u}\right)\right]^{k-1},
	\end{eqnarray*}

\noindent
where the second inequality is obtained by using Markov's inequality 
for $0<u<1$, and the last expression follows from independence and the properties of the moment generating functions of the random variables $\{K_i \} $ and $\{B_i \}$.\\	Clearly, if  
\[
\inf_{0<u<1} \left\{ \frac{\lambda d}{\lambda + u} \left(1 + \frac{ru}{1-u} \right) \right\} < 1,
\]  
then  
\[
\mathbb{E} \left(\text{total number of pathogens born in } \mathbb{T}_{d}^+ \right) < +\infty.  
\]  
In this case, the process \(\{\mathbb{T}_d^+, \lambda, r\}\) becomes extinct. Finally, the condition  
\[
\inf_{0<u<1} \left\{ \frac{\lambda d}{\lambda + u} \left(1 + \frac{ru}{1-u} \right) \right\} < 1  
\]  
is equivalent to  
\[
\lambda < \left[\frac{\sqrt{d-1} - \sqrt{rd}}{d(1-r) - 1} \right]^2,
\]  
thus establishing the result. 
\end{proof}

To facilitate the proof of item (ii) of Theorem~\ref{T:arbol_dirigido}, it is convenient to define an alternative version of the model \(\{\mathbb{T}_d^+, \lambda, r\}\), in which the initial pathogen, placed at the root of \(\mathbb{T}_d^+\), also has an associated killing time given by a mixed random variable. This variable is 0 with probability \(1 - r\) and follows an exponential distribution with rate 1 with probability \(r\).
We denote this variant of the model by \( \{\mathbb{T}_d^+, \lambda, r\}_{*}.\)
Let $q$ and $q_*$ denote the probability of extinction of \(\{\mathbb{T}_d^+, \lambda, r\}\) and \(\{\mathbb{T}_d^+, \lambda, r\}_*\), respectively. Conditioning on the killing time of the initial pathogen in \(\{\mathbb{T}_d^+, \lambda, r\}_*\) we have that 
 $q_*=(1-r) + rq.$ Thus, 
 $q^*=1$ if and only if $q=1$. Therefore, to prove survival in \(\{\mathbb{T}_d^+, \lambda, r\}\) it is sufficient to show survival in \(\{\mathbb{T}_d^+, \lambda, r\}_*\). Finally, we require the following lemma to study the survival of \(\{\mathbb{T}_d^+, \lambda, r\}_*\). 

\begin{lem}{\cite[Lemma 1]{aldous}}
\label{lemaldous}
Let $X_1, X_2, \ldots$ be i.i.d. random variables with $\mathbb{E}[X] <0$ and $\mathbb{P} [ X>0]>0$. Let $\mathbb{E}[e^{u X}]=\psi(u)$ be finite in some neighborhood of 0, and let $\rho=\inf_{u>0} \psi(u)$. Then,
$$ \lim_{n\rightarrow \infty}\frac{1}{n} \log \mathbb{P} \left[ \sum_{j=1}^k X_j>0, k=1,\ldots, n. \right]= \log \rho. $$ 
\end{lem}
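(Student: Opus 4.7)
The plan is to establish matching upper and lower bounds for $p_n := \mathbb{P}[S_k > 0,\, k = 1, \ldots, n]$, where $S_k := \sum_{j=1}^k X_j$. The upper bound is the routine direction: one drops the intermediate constraints, keeps only $\{S_n > 0\}$, and applies Markov's exponential inequality. For any $u > 0$,
$$p_n \le \mathbb{P}[S_n > 0] = \mathbb{P}[e^{u S_n} \ge 1] \le \mathbb{E}[e^{u S_n}] = \psi(u)^n.$$
Minimizing over $u > 0$ gives $p_n \le \rho^n$, hence $\limsup_{n\to\infty} \frac{1}{n}\log p_n \le \log \rho$.

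For the lower bound I would employ the Cram\'er exponential tilt. First one checks that the infimum $\rho$ is attained at an interior point $u^{*} > 0$: since $\psi'(0) = \mathbb{E}[X] < 0$, while convexity of $\psi$ together with $\mathbb{P}[X > 0] > 0$ forces $\psi(u) \to \infty$ as $u$ approaches the edge of its domain of finiteness, $\psi$ must have a unique minimizer $u^{*} > 0$ with $\psi'(u^{*}) = 0$. Define the tilted law $\widetilde{\mathbb{P}}$ on $(X_1, \ldots, X_n)$ by the Radon--Nikodym derivative $\psi(u^{*})^{-n} \exp(u^{*} S_n)$. Under $\widetilde{\mathbb{P}}$ the $X_i$ remain i.i.d.\ with $\widetilde{\mathbb{E}}[X] = \psi'(u^{*})/\psi(u^{*}) = 0$ and finite variance $\sigma^2 := \widetilde{\mathbb{E}}[X^2]$, and
$$p_n \;=\; \rho^n\, \widetilde{\mathbb{E}}\!\left[e^{-u^{*} S_n}\, \mathbf{1}\{S_k > 0,\, k \le n\}\right] \;\ge\; \rho^n\, e^{-u^{*} \sqrt{n}}\, \widetilde{\mathbb{P}}\!\left[0 < S_k,\, k \le n,\; S_n \le \sqrt{n}\right].$$

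The main technical obstacle is to show that the tilted probability on the right decays only polynomially in $n$, since any such prefactor is absorbed after dividing the logarithm by $n$. Because $\widetilde{\mathbb{P}}$ has made the walk centered with finite variance, one can invoke Donsker's invariance principle: $(S_{\lfloor nt\rfloor}/(\sigma\sqrt n))_{t \in [0,1]}$ converges weakly to a standard Brownian motion, and the event in question corresponds in the limit to Brownian motion staying nonnegative on $[0,1]$ and ending in $[0, 1/\sigma]$, an event of strictly positive probability. A careful version uses either a local central limit theorem to handle positivity of the final partial sum at the lattice scale, or a ballot/cycle--lemma estimate, to obtain $\widetilde{\mathbb{P}}[\,\cdots\,] \ge c n^{-1/2}$ for all large $n$. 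Granting such a polynomial lower bound, we conclude
$$\tfrac{1}{n}\log p_n \;\ge\; \log \rho \;-\; \tfrac{u^{*}}{\sqrt n} \;-\; \tfrac{C\log n}{n} \;\longrightarrow\; \log \rho,$$
which matches the upper bound and proves the lemma. I expect the invariance-principle step --- specifically justifying that restricting $S_n$ to a bounded window on the scale $\sqrt n$ does not ruin the polynomial lower bound --- to be the most delicate part of the argument.
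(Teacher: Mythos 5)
The paper does not actually prove this lemma: it is imported verbatim as Lemma 1 of Aldous and Krebs \cite{aldous}, so there is no in-paper argument to compare against. On its own merits, your proposal follows the standard route to this classical persistence estimate and its architecture is sound: the upper bound (drop the intermediate constraints, apply the exponential Chebyshev inequality, optimize over $u$) is complete as written, and the lower bound via the Cram\'er tilt at the minimizer of $\psi$ is the right idea.

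Two remarks on the lower bound, the first of which is a genuine gap. You assert that convexity and $\mathbb{P}[X>0]>0$ force $\psi(u)\to\infty$ at the edge of its domain of finiteness, so that the infimum is attained at an interior $u^{*}$ with $\psi'(u^{*})=0$. That is a steepness assumption, and it does not follow from the hypotheses, which only guarantee finiteness of $\psi$ near $0$: one can build $X$ with a heavy right tail modulated by $e^{-u_0 x}$ so that $\psi$ is finite on $[0,u_0]$ and still strictly decreasing at $u_0$; then $\rho=\psi(u_0)$ is attained at the boundary and the tilted walk retains negative drift, breaking your centering step. That case needs a separate argument --- for instance the cycle lemma, which by rotating the increments to start just after the last minimum of the partial sums gives $\mathbb{P}[S_k>0,\ k\le n]\ge n^{-1}\mathbb{P}[S_n>0]$, combined with the general Cram\'er lower bound $\liminf_n n^{-1}\log\mathbb{P}[S_n>0]\ge\log\rho$, which holds without steepness. (For the application in this paper the increments are built from exponentials, the moment generating function does blow up at the boundary, and your interior-minimizer argument applies.) Second, your estimate $\widetilde{\mathbb{P}}[S_k>0,\ k\le n,\ S_n\le\sqrt n]\ge c\,n^{-1/2}$ is true but imports nontrivial fluctuation theory for walks conditioned to stay positive; it is simpler to enlarge the window to $S_n\le\epsilon n$, observe that $\widetilde{\mathbb{P}}[S_n>\epsilon n]$ is exponentially small while the cycle lemma plus the central limit theorem give $\widetilde{\mathbb{P}}[S_k>0,\ k\le n]\ge c\,n^{-1}$, deduce $\liminf_n n^{-1}\log p_n\ge\log\rho-u^{*}\epsilon$, and let $\epsilon\to0$. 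With either repair the proof is correct.
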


\begin{proof}[Proof of Theorem \ref{T:arbol_dirigido} $(ii)$] 
From the above, it suffices to show that \(\{\mathbb{T}_d^+, \lambda, r\}_*\) survives if \(\lambda > \lambda_c(d, r)\). To this end, we use the fact that \(\lambda > \lambda_c(d, r)\) is equivalent to the condition  
\[
\inf_{0<u<1} \left\{ \frac{\lambda d}{\lambda + u} \left(1 + \frac{ru}{1-u} \right) \right\} > 1.  
\]

Let \(\{ B_i \}_{i \geq 1}\) be independent exponential random variables with rate \(\lambda\), and let \(\{K_i\}_{i \geq 1}\) be a sequence of independent mixed random variables, being zero with probability \(1 - r\) and following an exponential distribution with rate 1 with probability \(r\).  Let $Z_i=K_i-B_i$, $i=1,\ldots$. Thus, in the process $\{\mathbb{T}_d^+,\lambda,r\}_*$, the probability that a pathogen is born at a fixed vertex at the $k$-th is given by  
\begin{eqnarray*}\label{eq:Probverticefixo}	
		\mathbb{P}\left(\sum_{i=1}^j Z_i>0,~j=1,\ldots,k\right).
\end{eqnarray*}
 
Observe that, $\mathbb{E}[Z]=r-\frac{1}{\lambda}<0$ if $r \lambda <1$. So, by Lemma \ref{lemaldous},	

\[ \lim_{k\rightarrow\infty}\frac{1}{k}\log\mathbb{P}\left(\sum_{i=1}^j Z_i>0,~j=1,\ldots,k\right)= \hfill \] \[\hfill = \log\left[\inf_{0<u<1}\left\{\frac{\lambda }{\lambda + u}\left(1+\frac{ru}{1-u}\right)\right\}\right].\]

Moreover, by assumption, for some $\delta>0$,
$$\inf_{0<u<1}\left\{\frac{d\lambda }{\lambda + u}\left(1+\frac{ru}{1-u}\right)\right\}=1+\delta.$$
Suppose for now that $r \lambda <1$ and take $\epsilon=\delta/2$. Then, there exists $K\in\mathbb{N}$ such that for all $k\geq K$,

\begin{eqnarray}\label{probpatogk}
\mathbb{P}\left(\sum_{i=1}^j Z_i>0,~j=1,\ldots, k\right)
&>&\left[\inf_{0<u<1}\left\{\frac{\lambda }{\lambda + u}\left(1+\frac{ru}{1-u}\right)\right\}-\frac{\epsilon}{d}\right]^k \nonumber\\
&=&\frac{1}{d^k}\left[\inf_{0<u<1}\left\{\frac{d\lambda }{\lambda + u}\left(1+\frac{ru}{1-u}\right)\right\}-\epsilon\right]^k \nonumber\\
&=&\frac{1}{d^k}\left(1+\frac{\delta}{2}\right)^{k}.
\end{eqnarray}

 Let $k\in \mathbb{N}$, we say that  a pathogen in $\{\mathbb{T}_d^+,\lambda,r\}_*$ is an \textit{special particle} if it is the initial pathogen placed in the root of $\mathbb{T}_d^+$, or if it is a new mutation that arises at the \(nk\)-th level of \(\mathbb{T}_d^+\), descending from a special particle at the \((n-1)k\)-th level, with no living ancestors.		Let $W_n$ denote the number of special particles in the generation $nk$ of $\{\mathbb{T}_d^+,\lambda,r\}_*$. 
The process $(W_n)_{n\geq0}$ defines a Galton-Watson branching process with $W_0=1$. For each vertex $\nu\in\mathbb{T}_d^+$, the probability that a pathogen is born at vertex $\nu$ and is a mutation is $r\mathbb{P}(\mbox{a pathogen is born at}~v).$    
Thus, the expected number of special particles in generation $k$ in $\{\mathbb{T}_d^+,\lambda,r\}_*$ is given by $$\mathbb{E}[W_1]=d^kr\mathbb{P}\left(\sum_{i=1}^j Z_i>0,~j=1,\ldots, k\right).$$
By (\ref{probpatogk}), we see that the last quantity is greater than 1 for some $k$ sufficiently large. So, the process $(W_n)_{n\geq0}$ is a supercritical branching process. As a consequence $\{\mathbb{T}_d^+,\lambda,r\}_*$ survives.
To conclude, Proposition~\ref{monotonia} shows  that the survival probability is non-decreasing in $\lambda$ and $r$. Therefore, this result also holds for $r \lambda \geq 1$, provided that $$\inf_{0<u<1}\left\{\frac{d\lambda}{\lambda + u}\left(1+\frac{ru}{1-u}\right)\right\} > 1.$$
\end{proof}

\begin{proof}[Proof of Corollary \ref{C:arbol}]
First we define a coupling between the processes 
$\{\mathbb{T}_d,\lambda,r\}$ and $\{\mathbb{T}_d^+,\lambda,r\}$ in such a way that the latter is stochastically dominated by the former. Every pathogen in $\{\mathbb{T}_d^+,\lambda,r\}$ is associated to a pathogen in $\{\mathbb{T}_d,\lambda,r\}$. In the model $\{\mathbb{T}_d, \lambda, r\}$, whenever a pathogen at vertex \( x \) attempts to create a new pathogen at a neighboring vertex that is closer to the root than \( x \), it will succeed in creating the new pathogen, provided that the target vertex is empty. In contrast, in the model $\{\mathbb{T}_d^+, \lambda, r\}$, such birth attempts are not possible. On the other hand, all births that occur in $\{\mathbb{T}_d^+, \lambda, r\}$, occur in $\{\mathbb{T}_d, \lambda, r\}.$ As a consequence, if the process $\{\mathbb{T}_d^+,\lambda,r\}$ survives, the same happens to $\{\mathbb{T}_d,\lambda,r\}$.

Next, we define a coupling between the processes $\{\mathbb{T}_d, \lambda, r\}$ and $\{\mathbb{T}_{d+1}^+, \lambda, r\}$ such that the former is stochastically dominated by the latter. Each pathogen in $\{\mathbb{T}_d, \lambda, r\}$ can be associated with a pathogen in $\{\mathbb{T}_{d+1}^+, \lambda, r\}$. In the model $\{\mathbb{T}_d, \lambda, r\}$, we associate the neighboring vertex to \( x \) that is closer to the root with the extra vertex in the model $\{\mathbb{T}_{d+1}^+, \lambda, r\}$. Thus, in the model $\{\mathbb{T}_d, \lambda, r\}$, whenever a pathogen at vertex \( x \) attempts to create a new pathogen at a neighboring vertex that is closer to the root than \( x \), it will succeed in creating the new pathogen, provided that the target vertex is empty. In contrast, in the model $\{\mathbb{T}_{d+1}^+, \lambda, r\}$, such a birth attempt occurs at the extra vertex. As a consequence if the process $\{\mathbb{T}_{d+1}^+,\lambda,r\}$ dies out, the same happens to $\{\mathbb{T}_{d},\lambda,r\}$.

Finally, the result follows from Theorem~\ref{T:arbol_dirigido} and the fact that the process $\{\mathbb{T}_d^+, \lambda, r\}$ is stochastically dominated by the process $\{\mathbb{T}_d, \lambda, r\}$, which in turn is stochastically dominated by the process $\{\mathbb{T}_{d+1}^+, \lambda, r\}$.
\end{proof}

\section*{Acknowledgements}  
The authors thank the editor and the referee for their insightful and valuable comments, which have greatly improved the manuscript. This work was supported by the ANID–FONDECYT Iniciación grant No. 11230220 and by Universidad de Antioquia (project No. 2025-80410).


\begin{thebibliography}{}
\bibitem{aldous}
D. Aldous  and  W. B. Krebs.  The Birth-and-Assassination process. 
\textit{Statistics and Probability Letters,} \textbf{10}, 427-430, (1990).

\bibitem{BZA2016}
D. Bertacchi, F. Zucca, R. Ambrosini. The timing of life history events in presence of soft disturbances, \textit{Journal of Theoretical Biology,} \textbf{389}, 287-303, (2016).

\bibitem{Cox} J.T. Cox, R.B. Schinazi. A Branching Process for Virus Survival. \textit{Journal of Applied Probability}, \textbf{49}(3), 888--894, (2012).


\bibitem{GLMR2024}{C. Grejo, F. Lopes, F. Machado and A. Rold\'an-Correa.} A stochastic model for immune response with mutations and evolution. \url{https://doi.org/10.48550/arXiv.2404.17950}	

\bibitem{H1974} T.E. Harris. Contact Interactions on a Lattice.	\textit{Ann. Probab.}, \textbf{2}, (6), 969-988, (1974).


\bibitem{LanchierBook}{N. Lanchier.} Stochastic Interacting Systems in Life and Social Sciences. Berlin, Boston: De Gruyter, (2024). 

\bibitem{LSS2008}{T. M. Liggett, R. B. Schinazi, J. Schweinsberg.} A contact process with mutations on a tree. \textit{Stochastic Processes and their Applications,} \textbf{118}, (3), 319--332, (2008).


\bibitem{R} R Core Team. R: A Language and Environment for Statistical Computing; R Foundation for Statistical Computing: Vienna, Austria,
2024. Available online: \url{https://www.R-project.org/}. 

\bibitem{schinazi2}{R.B. Schinazi.} Survival Under High Mutation. In: Giacomin G., Olla S., Saada E., Spohn H., Stoltz G. (eds) Stochastic Dynamics Out of Equilibrium. IHPStochDyn 2017. Springer Proceedings in Mathematics \& Statistics, vol 282. Springer, Cham (2019).


\bibitem{SS2008} {R. Schinazi and J. Schweinsberg.}  Spatial and Non-spatial Stochastic Models for Immune Response. \textit{Markov Process. Related Fields,} \textbf{14}, (2), 255-276, (2008). 

\bibitem{FZ2014}{ F. Zucca.} Persistent and susceptible bacteria with individual deaths. \textit{Journal of Theoretical Biology,} \textbf{343}, 69-78, (2014).



\end{thebibliography}
\end{document}